\newtheorem{theorem}{Theorem}
\newtheorem{lemma}{Lemma}
\newtheorem{proposition}{Proposition}
\newtheorem{assumption}{Assumption}
\newtheorem{remark}{Remark}
\theoremstyle{remark}
\begin{document}

\title[An inverse source problem]{Lipschitz stability for an inverse source
scattering problem at a fixed frequency}

\author[P. Li]{Peijun Li}
\address{Department of Mathematics, Purdue University, West Lafayette, Indiana
47907, USA}
\email{lipeijun@math.purdue.edu}

\author[J. Zhai]{Jian Zhai}
\address{Institute for Advanced Study, The Hong Kong University of Science and
Technology, Kowloon, Hong Kong, China}
\email{iasjzhai@ust.hk}
  
\author[Y. Zhao]{Yue Zhao}
\address{School of Mathematics and Statistics, Central China Normal University, Wuhan 430079, China}
\email{zhaoyueccnu@163.com}

\thanks{The research of PL is supported in part by the NSF grant
DMS-1912704.}

\subjclass[2010]{35R30, 78A46}

\keywords{inverse source problem, the Helmholtz equation, stability}

\begin{abstract}
This paper is concerned with an inverse source problem for the three-dimensional
Helmholtz equation by a single boundary measurement at a fixed frequency. We
show the Lipschitz stability under the assumption that the source function is
piecewise constant on a domain which is made of a union of
disjoint convex polyhedral subdomains.
\end{abstract}

\maketitle

\section{Introduction}

The inverse source scattering problems arise in diverse scientific and
industrial areas such as antenna design and synthesis, medical imaging
\cite{FKM-IP}. In
general there is no uniqueness for the inverse source scattering problems with
the boundary data at a fixed frequency \cite{bc-77}. This is clear since a single near-field or
far-field measurement gives a function of $n-1$ independent variables in an 
$n$-dimensional space, while the source function has $n$ independent variables. An effective approach to
overcome the non-uniqueness issue is the use of multi-frequency data. More
interestingly, the use of multi-frequency data may enhance the stability of the problems \cite{actv, blrx,
BLLT, blt, blz, LY-17, LZZ, cheng2016increasing}.

Nevertheless, with single-frequency data, it is proved in \cite{ks-cpam, s} that the support of the source can still be determined in
certain cases.
In \cite{ikehata}, it was shown that the convex
hull of a polygonal source can be determined from a single measurement. For
sources with a convex polygonal support, it has been proved that the support and the values of the source function at
corner points can be uniquely determined by a single measurement in
homogeneous \cite{blasten} and inhomogeneous media \cite{hl}.  In \cite{bps},
the
authors addressed the absence of real non-scattering energies by examining the
phenomenon that corners always scatter. Related studies can be found in
\cite{FI} and \cite{eh} on the uniqueness of the shape identification by using a
single measurement in the inverse conductivity and medium scattering problems,
respectively. We refer to \cite{abf, bt} for the uniqueness and numerical
results for recovering point and dipole sources.

Consider the three-dimensional Helmholtz equation
\begin{align}\label{eqn}
\Delta u(x) + \kappa^2 u(x) = f(x), \quad x\in\mathbb{R}^3,
\end{align}
where $\kappa>0$ is the wavenumber, $u$ denotes the wave field, and the
source function $f\in L^\infty(\mathbb R^3)$ represents the electric current
density and is assumed to have a compact support contained in a bounded
domain $\Omega\subset\mathbb R^3$ with a connected complement $\mathbb
R^3\backslash \overline{\Omega}.$ Furthermore, we assume that
$\overline{\Omega}\subset B_R:=\{x\in\mathbb R^3: |x|<R\}$, where
$R>0$ is a constant. The wave field $u$ is required to satisfy the
Sommerfeld radiation condition
\begin{align}\label{src}
\lim_{r\to\infty}r (\partial_r u-\mathrm{i}\kappa u)=0,\quad r=|x|
\end{align}
uniformly in all directions $\hat{x} = x/|x|$.

Given the source $f$, the direct scattering problem is to determine the wave
field $u$ which satisfies \eqref{eqn}--\eqref{src}. It is known that the direct 
scattering problem  has a unique solution $u\in H^2(B_R)$ for an arbitrary wavenumber
$\kappa>0$  and the solution $u$ satisfies the following estimate (cf.
\cite{CK}): 
\begin{equation}\label{est_u}
\|u\|_{H^2(B_R)}\leq C\|f\|_{L^\infty(\Omega)},
\end{equation}
where $C$ is a positive constant. This paper is concerned with the inverse
source scattering problem, which is to determine $f$ from the boundary
measurement of $u$ on $\partial B_R=\{x\in\mathbb R^3: |x|=R\}$ at a fixed
wavenumber $\kappa$. 

In this work, we consider the case where the source
$f$ is a piecewise constant function. More precisely, we assume 
\begin{align}\label{formf}
f(x) = \sum_{j=1}^{N} c_j\chi_{D_j}(x),
\end{align}
where $D_j,\, j=1, \cdots, N$ are known disjoint convex polyhedral domains and $c_j,\,
j=1, \cdots, N$ are unknown constants. The goal is to establish the Lipschitz
stability of determining the constants $c_j,\, j=1, \cdots, N$ from the 
measurement of $u$ on $\partial B_R$ at a fixed wavenumber $\kappa$. 
It is known that there exist certain sources that produce no
measurable signals, and those sources are called non-radiating sources
\cite{bc-77}.  However, since the support of the source function
\eqref{formf} has corners, it is a radiating source (cf.
\cite{blasten}). This makes the recovery of $f$ possible. We refer to
\cite{actv, AS} for the characterization of radiating and non-radiating sources
for the Helmholtz equation and Maxwell equations.

Our study is motivated by the idea introduced by Alessandrini and Vessella in
\cite{av}, where the electrical impedance tomography problem was studied. This
approach was further developed to study various inverse coefficient problems
(cf., \cite{adgs,bf, BFMRV, bhq, bhz,bfv}). In this paper, we use similar ideas
to solve our inverse source problem. In \cite{blasten, ikehata}, the inverse
source problems are studied by using complex geometric optics (CGO)
solutions, which are also typical mathematical tools for the inverse coefficient
problems \cite{calderon, su}.

We construct singular solutions and utilize their ``blow-up"
behaviors near the corners of subdomains $D_j$, $j=1,2,\cdots, N$. The quantitative
estimate of unique continuation of the
solution for the Helmholtz equation, which is derived from a three spheres
inequality, plays an essential role in the procedure. We derive
a logarithmic-type stability for recovering $c_1,c_2,\cdots, c_N$, and then
uniqueness follows immediately. Since we are recovering a finite number of
unknowns, the Lipschitz-type stability estimate is obtained.  Comparing with
the uniqueness results in \cite{blasten, hl},
we provide the uniqueness for a different class of source functions and achieve
the optimal stability estimate.
We also want to point out that recently there are numerous results
of establishing Lipschitz stability for some inverse problems using finite
measurements (cf. \cite{AS, lt, rs, harrach} for the Calder\'{o}n problem and
\cite{BH-IP} for inverse scattering problems).

The paper is organized as follows. In Section \ref{mr}, we summarize the main
results. Section \ref{pf} is devoted to the proof of the main result. The paper
is concluded with some general remarks and directions for future work in Section
\ref{conclusion}.

\section{Main result}\label{mr}

In this section, we make some extra assumptions on the source function and
state the main result of this work. 

\subsection{Geometry setup}

Let the piecewise constant source function be given as 
\begin{align*}
f(x) = \sum_{j=1}^N c_j \chi_{D_j}(x), \quad \overline{\Omega} =
\cup_{j=1}^N\overline{D}_j,
\end{align*}
where $c_j\in\mathbb C$ are constants, and $D_j$ are mutually disjoint bounded
open subsets in $\mathbb R^3$. Assume that
$\mathrm{dist}(\Omega,\mathbb{R}^3\setminus B_R)\geq r_0$ for some constant
$r_0>0$. Moreover, we consider the geometric setup of the
domains $D_j$ that can be described as the polyhedral cell geometry as follows
(cf. \cite{BH-IP}).

\begin{assumption}\label{gs}
We assume that 
\begin{enumerate}
\item the subdomains $D_j\subset\mathbb R^3, 1\leq j \leq N$ are
convex polyhedrons;


\item for each $k=0,\cdots, N-1$, $\cup_{j=k+1}^N\overline{D_j}$ is simply
connected, and there exists a constant $r_0$ such that $\{x\in\mathbb{R}^3\vert
\mathrm{dist}(x,\cup_{j=k+1}^N\overline{D_j})>2r_0\}$ is connected;

\item each $D_j$ has a vertex, denoted by $P^{(j)}$, such that $B_{3r_0}(P^{(j)})\cap
D_k=\emptyset$ for any $k>j$.

\end{enumerate}
\end{assumption}

An example domain in $\mathbb{R}^2$ satisfying the above assumptions is
illustrated in Figure \ref{domain1}.\\

\begin{figure}[htbp]
\centering
\includegraphics[width=0.4\textwidth]{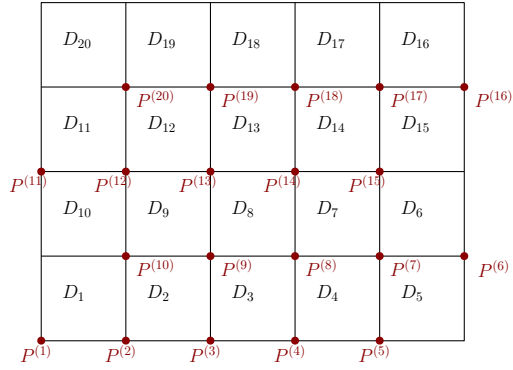}
\caption{An example of the domain.}
\label{domain1}
\end{figure}

Let $(x_1,x_2,x_3)$ be the Cartesian coordinate in $\mathbb{R}^3$, and introduce the spherical coordinates
\[
x_1=\rho\sin\theta\cos\varphi,\quad x_2=\rho\sin\theta\sin\varphi,\quad x_3=\rho\cos\theta.
\]
Assume $\alpha=\alpha(\varphi)$ is a continuous function on $[0,2\pi]$, such that $\alpha(\varphi)\in (0,\frac{\pi}{2})$ for any $\varphi\in [0,2\pi]$. We let 
\[
\mathcal{C}(r_0, \alpha): = \{(\rho, \theta,\varphi): 0\leq\rho\leq r_0,\,
0\leq \theta\leq \alpha(\varphi),0\leq\varphi\leq 2\pi\}
\]
 denote the
cone with radius $r_0$ and vertical angle $\alpha$. The vertex of the cone is the origin and the axis is the $x_3$-axis. The cone $\mathcal{C}(r_0,\alpha)$ is depicted in Figure \ref{cone_eg}.
\begin{figure}[htbp]
\centering
\includegraphics[width=0.4\textwidth]{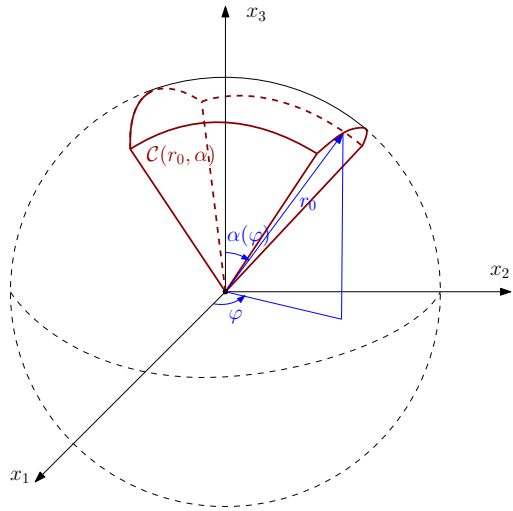}
\caption{Illustration of $\mathcal{C}(r_0,\alpha)$.}
\label{cone_eg}
\end{figure}

\begin{assumption}\label{convexp}
Let $\alpha_1,\alpha_2$ be two constants satisfying
$0<\alpha_1<\alpha_2<\frac{\pi}{2}$. For each $D_j$, $j=1,2\cdots, N$, let $P^{(j)}_\ell$
be a vertex. Assume that, after a rigid transform, $P^{(j)}_\ell=(0,0,0)$, and $B_{r_0}\cap
D_j=\mathcal{C}(r_0,\alpha^{(j)}_\ell)$ with
$\alpha_1<\alpha^{(j)}_\ell(\varphi)<\alpha_2$ for any $\varphi\in[0,2\pi]$.
\end{assumption}

In addition, we also make the following assumption on the source function. 

\begin{assumption}\label{assumption}
The source function $f$ has the compact support $\overline{\Omega}$ with
$|\Omega|\leq A$ and satisfies $\|f\|_{L^\infty(\Omega)}\leq E$, where $A$ and
$E$ are positive constants.
\end{assumption}

\subsection{Statement of the main result}

 Denote
\begin{align*}
\epsilon := \|u\|_{H^1(\partial B_R)}.
\end{align*}

The following theorem is the main result of this paper.

\begin{theorem}\label{main_theorem}
Let $f$ satisfy Assumptions \ref{gs}--\ref{assumption} and the subdomains $D_j,
j=1, \dots, N$ are given. Then the following estimate holds:
\begin{align}\label{lip_stability}
\|f\|_{L^\infty(\Omega)}\lesssim \epsilon.
\end{align}
\end{theorem}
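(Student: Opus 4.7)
The plan is to reduce Theorem \ref{main_theorem} to an injectivity statement and then prove the injectivity quantitatively by a singular-solution argument at each vertex. Since $D_1,\dots,D_N$ are fixed, the forward map $\Phi\colon \mathbb C^N\to H^1(\partial B_R)$, $(c_1,\dots,c_N)\mapsto u|_{\partial B_R}$, is linear and bounded by the well-posedness estimate \eqref{est_u}. Once $\Phi$ is shown to be injective, the map $(c_1,\dots,c_N)\mapsto \|\Phi(c_1,\dots,c_N)\|_{H^1(\partial B_R)}$ is a norm on the finite-dimensional space $\mathbb C^N$, hence equivalent to $\max_j|c_j|=\|f\|_{L^\infty(\Omega)}$; this equivalence is precisely \eqref{lip_stability}. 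In fact the estimates below will yield the quantitative $|c_j|\lesssim |\log\epsilon|^{-s}$ for some $s>0$, from which uniqueness (and thus the Lipschitz bound) follows.

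\textbf{Inductive structure at the vertices.} Uniqueness is established inductively in the ordering provided by Assumption \ref{gs}. By item (3), the vertex $P^{(1)}$ of $D_1$ satisfies $B_{3r_0}(P^{(1)})\cap D_k=\emptyset$ for every $k\geq 2$, so inside $B_{r_0}(P^{(1)})$ the source reduces to $c_1\chi_{D_1}$, and by Assumption \ref{convexp} the set $B_{r_0}(P^{(1)})\cap D_1$ is the cone $\mathcal{C}(r_0,\alpha^{(1)})$ with opening $\alpha_1<\alpha^{(1)}<\alpha_2$. Once $c_1=0$ is proved, the reduced problem with source $\sum_{k\geq 2}c_k\chi_{D_k}$ again verifies Assumption \ref{gs}(2)--(3), so $P^{(2)}$ can be treated the same way, and so on.

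\textbf{Singular solutions and the extraction identity.} Fix the current vertex $P=P^{(j)}$ and set $\omega=B_{r_0}(P)\cap D_j$. Following the corner-scattering strategy of \cite{ikehata,blasten,hl}, I would construct a family $\{v_\tau\}_{\tau>0}$ of solutions to the homogeneous Helmholtz equation $\Delta v_\tau+\kappa^2 v_\tau=0$ in $B_{r_0}(P)$ such that
\begin{equation*}
|v_\tau(x)|\le Ce^{-c\tau\,\mathrm{dist}(x,P)}\quad\text{on }B_{r_0}(P)\setminus\overline\omega,\qquad \left|\int_\omega v_\tau\,dx\right|\gtrsim \tau^{-p},
\end{equation*}
with constants uniform for $\alpha^{(j)}\in[\alpha_1,\alpha_2]$. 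Multiplying \eqref{eqn} by $v_\tau$ and applying Green's identity in $B_R$ (after excising a shrinking ball around $P$ and using that $v_\tau$ is smooth there) together with the already established $c_1=\cdots=c_{j-1}=0$ yields
\begin{equation*}
c_j\int_\omega v_\tau\,dx \;=\; \int_{\partial B_R}\bigl(v_\tau\partial_\nu u-u\partial_\nu v_\tau\bigr)\,dS \;+\; \mathcal R_\tau,
\end{equation*}
where $\mathcal R_\tau$ gathers the volume integrals of $f\,v_\tau$ over the $D_k$ with $k>j$ and over $B_{r_0}(P)\setminus\overline\omega$.

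\textbf{Quantitative unique continuation and balancing; main obstacle.} The boundary pairing on $\partial B_R$ is bounded by $C\tau^q\epsilon$ because $v_\tau$ is smooth away from $P$. For $\mathcal R_\tau$, the exponential decay of $v_\tau$ off $\omega$ reduces matters to controlling $\|u\|_{L^2}$ on subsets where $u$ satisfies the homogeneous Helmholtz equation; iterating the three-spheres inequality along a chain of balls inside the connected open set $\{x:\mathrm{dist}(x,\cup_{k\geq j}\overline{D_k})>2r_0\}$ (connected by Assumption \ref{gs}(2)) propagates the smallness from $\partial B_R$ inward and, combined with the a priori bound $\|u\|_{H^2(B_R)}\le CE$ from \eqref{est_u}, gives $\|u\|_{L^2}\le CE^{1-\theta}\epsilon^\theta$ for some $\theta\in(0,1)$. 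Putting the pieces together,
\begin{equation*}
|c_j|\,\tau^{-p} \;\lesssim\; \tau^q\epsilon + e^{-c\tau}E^{1-\theta}\epsilon^\theta,
\end{equation*}
and optimizing $\tau\sim c'|\log\epsilon|$ yields $|c_j|\lesssim|\log\epsilon|^{-s}$, which forces $c_j=0$ when $\epsilon=0$, completing the induction. The main technical obstacle is the construction of $v_\tau$ in the three-dimensional cone with the sharp growth/decay dichotomy: unlike the two-dimensional case, it requires spherical harmonic expansions on the cross-section of $\omega$, and the uniform opening bound $\alpha_1<\alpha^{(j)}<\alpha_2$ is essential for obtaining constants $c,p,q$ that do not degenerate across the $N$ vertices.
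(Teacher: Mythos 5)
Your overall architecture---induction over the vertices in the order fixed by Assumption \ref{gs}, a test-function identity that isolates $c_j$ at the vertex $P^{(j)}$, quantitative unique continuation via three-spheres chains, optimization of an asymptotic parameter against $\epsilon$, and a final finite-dimensional norm-equivalence step to upgrade the logarithmic bound to Lipschitz---matches the paper's. The core mechanism, however, is genuinely different: you propose CGO-type solutions $v_\tau$ in the corner-scattering style of Bl{\aa}sten--P\"aiv\"arinta--Sylvester, whereas the paper uses point-singularity solutions $\Phi_k(x,y)=-\mathrm{Im}(\partial_{x_3}^3G(x-y))$ with the singular point $y_r=P^{(k+1)}+(0,0,-r)$ approaching the vertex from \emph{outside} the cone (the Alessandrini--Vessella mechanism), and it applies unique continuation not to $u$ but to the auxiliary potential $S_k(y)=\int_{U_k}f(x)\Phi_k(x,y)\,\mathrm{d}x$, which solves the homogeneous Helmholtz equation in $y$ throughout the exterior region $\mathcal K_k$.

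There is a genuine gap in your version, located exactly where the two routes diverge: the postulated properties of $v_\tau$ cannot be realized, and without them $\mathcal R_\tau$ is uncontrolled. A Helmholtz CGO $e^{\zeta\cdot x}$ with $\zeta\cdot\zeta=-\kappa^2$ and $\mathrm{Re}\,\zeta=-\tau d$ decays exponentially only on the half-space $\{d\cdot(x-P)>0\}$ and grows like $e^{c\tau}$ on the complementary half-space; it does not decay on $B_{r_0}(P)\setminus\overline\omega$ (half of which lies in the growth region), let alone on $B_R\setminus\overline\omega$. Since your Green's identity is taken over all of $B_R$, the remainder $\mathcal R_\tau=\sum_{k>j}c_k\int_{D_k}v_\tau\,\mathrm dx$ involves cells whose amplitudes are \emph{not yet estimated} at stage $j$ of your induction; Assumption \ref{gs}(3) only keeps these $D_k$ out of $B_{3r_0}(P^{(j)})$, not out of the growth half-space, so $|\mathcal R_\tau|$ can be as large as $Ee^{c\tau R}$ and destroys the balance $|c_j|\tau^{-p}\lesssim\tau^q\epsilon+e^{-c\tau}E^{1-\theta}\epsilon^\theta$. (The boundary pairing has the same defect---it is $O(e^{c\tau R}\epsilon)$ rather than $O(\tau^q\epsilon)$---but that alone could be absorbed by taking $\tau\sim c'|\log\epsilon|$ with $c'$ small; the volume term cannot.) The paper's choice of singular solution is designed precisely to avoid this: $\Phi_k(\cdot,y_r)$ blows up only algebraically at $y_r$ and is uniformly bounded at distance $\gtrsim r_0$ from it, so the not-yet-estimated cells contribute only $O(1)$ (the bound $|I_2|\lesssim1$), which is beaten by the lower bound $\int_{\mathcal C'(r,r_0,\alpha)}\Phi\geq C_0r^{-1}-C_1|\log r|$, while the already-estimated cells $W_k$ contribute $O(\omega_k(\epsilon))$. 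If you insist on the CGO route you must instead localize Green's identity to $B_{r_0}(P^{(j)})$ and then quantitatively control $u$ and $\partial_\nu u$ on $\partial B_{r_0}(P^{(j)})$ in terms of $\epsilon$; that propagation of smallness from $\partial B_R$ down to a sphere touching $\partial\Omega$, with exponents that degenerate near the corner, is a substantial missing step that your sketch does not supply and that the paper's Proposition \ref{estimate} performs for $S_k$ rather than for $u$.
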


Hereafter, the notation $a\lesssim b$ stands for $a\leq Cb$, where $C>0$ is a
positive constant which depends on the following parameters: $\kappa, A,E,N,
r_0,R,\alpha_1,\alpha_2$. 

\begin{remark}
It is clear to note that the optimal Lipschitz stability \eqref{lip_stability}
implies the uniqueness, i.e., if $\epsilon = 0$, then $f=0$. We mention that
the Lipschitz constant in the estimate \eqref{lip_stability} grows exponentially
with respect to the number of subdomains $N$, which means that the
stability estimate deteriorates dramatically as $N$ grows. We refer to
\textnormal{\cite{rondi,bhq}} for related studies of this behavior.
The Lipschitz constant also deteriorates when the number $r_0$ decreases due to
the instability of the unique continuation principle and the use of increased
number of three spheres inequalities.
\end{remark}

\subsection{Construction of singular solutions}\label{ss}

To prove the theorem, we need to construct singular solutions to the Helmholtz
equation and use their asymptotic behaviors near the singularities.
For the inverse coefficient problems considered in \cite{adgs,av, bf, BFMRV,
bhq, bhz,bfv}, typically one may deal with a product of two singular
solutions, whose positivity can be guaranteed. For our inverse source problem,
we deal with only one singular solution, and therefore more sophisticated
analysis is needed. In particular, we need to derive a lower bound on the
integral of the singular solution over a cone, when the singular point is
outside the cone and close to the vertex. One will see that the cone has to be
strictly convex at the vertex in order to have such a bound. Since this is the
key difference from previous work on the inverse coefficient problems, we 
provide more details in this section.

Denote by $G(x) = \frac{e^{\mathrm{i}\kappa|x|}}{|x|}$ the fundamental
solution to
the three-dimensional Helmholtz equation in a homogeneous medium. By simple
calculations, we obtain for sufficiently small $|x|$ that 
\begin{align*}
\partial^3_{x_3}\frac{e^{\mathrm{i}\kappa|x|}}{|x|}&\sim
\left(\frac{3x_3}{|x|^5}+\frac{6x_3}{|x|^5}-\frac{15x_3^3}{|x|^7}\right)e^{
\mathrm{i}\kappa|x|}+\mathcal{O}(|x|^{-3})\\
&=\frac{x_3(9x_1^2+9x_2^2-6x_3^2)}{|x|^7}e^{
\mathrm{i}\kappa|x|}+\mathcal{O}(|x|^{-3}).
\end{align*}
We will use the following singular solution

\begin{equation}\label{Phi_asymp}
\begin{split}
\Phi(x)=-\mathrm{Im}\Big(\partial^3_{x_3}\frac{e^{\mathrm{i}\kappa|x|}}{|x|}\Big)=&\frac
{x_3(-9x_1^2-9x_2^2+6x_3^2)}{|x|^7}\cos(\kappa|x|)+\mathcal{O}(|x|^{-3})\\
=&\frac
{x_3(-9x_1^2-9x_2^2+6x_3^2)}{|x|^7}+\mathcal{O}(|x|^{-3}),
\end{split}
\end{equation}

which has a singularity at $x=0$.

Consider a cone $\mathcal{C}(r_0, \alpha)= \{(\rho, \theta,\varphi): 0\leq\rho\leq r_0,
0\leq \theta\leq \alpha(\varphi),0\leq\varphi\leq 2\pi\}$, with $\alpha_1<\alpha(\varphi)<\alpha_2$ for any $\varphi\in[0,2\pi]$. We assume that $0<\alpha_1<\alpha_2<\frac{\pi}{2}$, and then the cone $\mathcal{C}(r_0, \alpha)$ is convex near the vertex. For our purpose, one can think this vertex as a corner of some $D_j$.
Denote
\[
\mathcal{C}'(r,r_0,\alpha):=B_{r_0}(0)\cap\{(0,0,r)+\mathcal{C}(r_0,\alpha)\}.
\]
See Figure \ref{cone}(A) for an illustration.
 Substitute
\[
x_1=r\widetilde{\rho}\sin\widetilde{\theta}\cos\varphi,\quad x_2=r\widetilde{\rho}\sin\widetilde{\theta}\sin\varphi,\quad x_3=r\widetilde{\rho}\cos\widetilde{\theta}.
\]
Then $\mathcal{C}'(r,r_0,\alpha)$ can be expressed as
\[
\mathcal{C}'(r,r_0,\alpha)=\{(\widetilde{\rho},\widetilde{\theta},\varphi):\, 1\leq\widetilde{\rho}\leq\frac{r_0}{r},0\leq \theta\leq \widetilde{\alpha}(\varphi,\widetilde{\rho}),0\leq\varphi\leq 2\pi\},
\]
for some $\widetilde{\alpha}$ satisfying $\widetilde{\alpha}(\varphi,\widetilde{\rho})<\alpha_2$ and $\widetilde{\alpha}(\varphi,1)=0$.

\begin{figure}[t!]
    \begin{subfigure}{0.35\textwidth}
      \includegraphics[width=\textwidth]{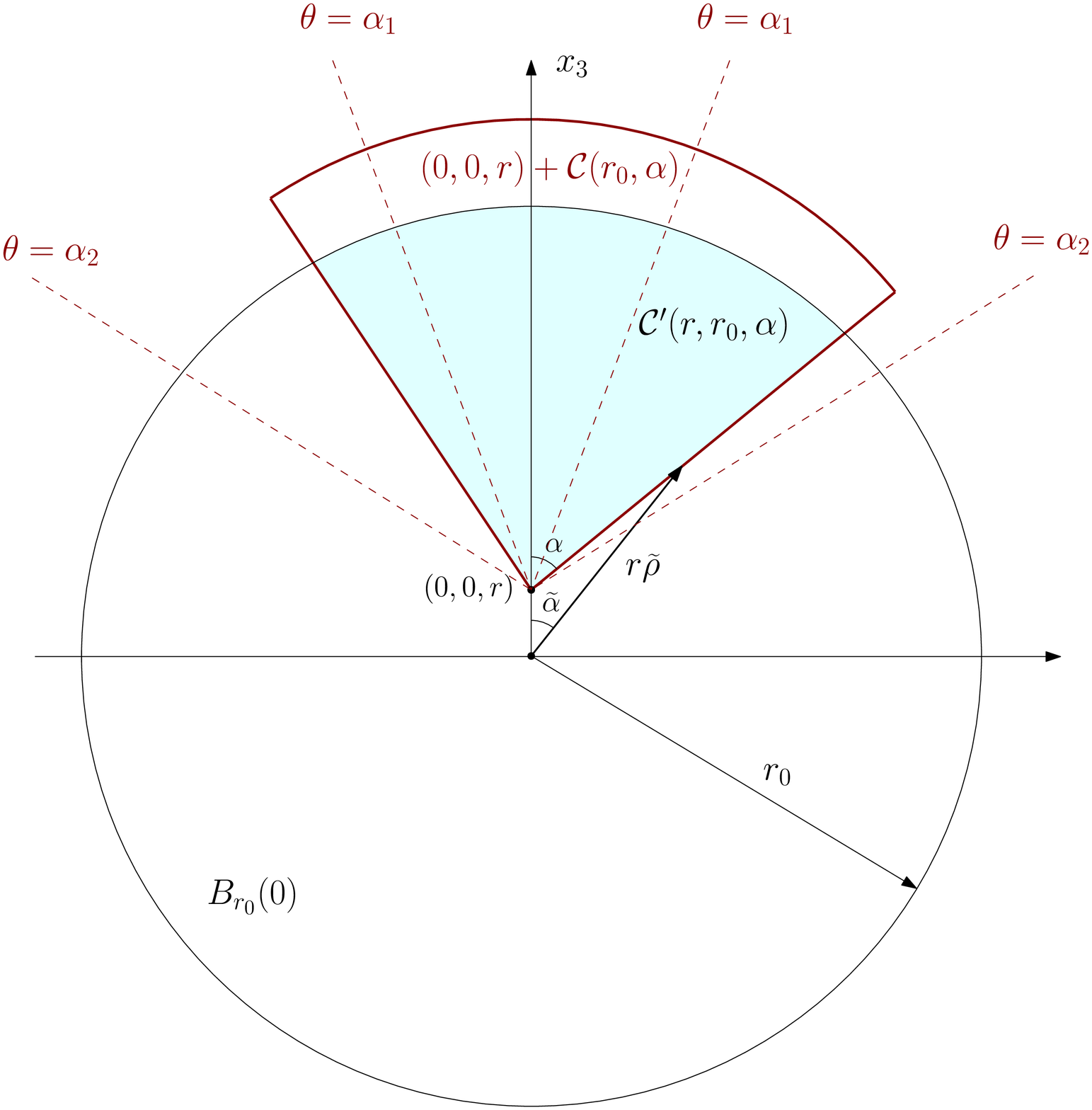}
      \caption{Illustration of $\mathcal{C}(r_0,\alpha)$ and $\mathcal{C}'(r,r_0,\alpha)$.}
    \end{subfigure}
    \hspace{1in}
    \begin{subfigure}{0.35\textwidth}
      \includegraphics[width=\textwidth]{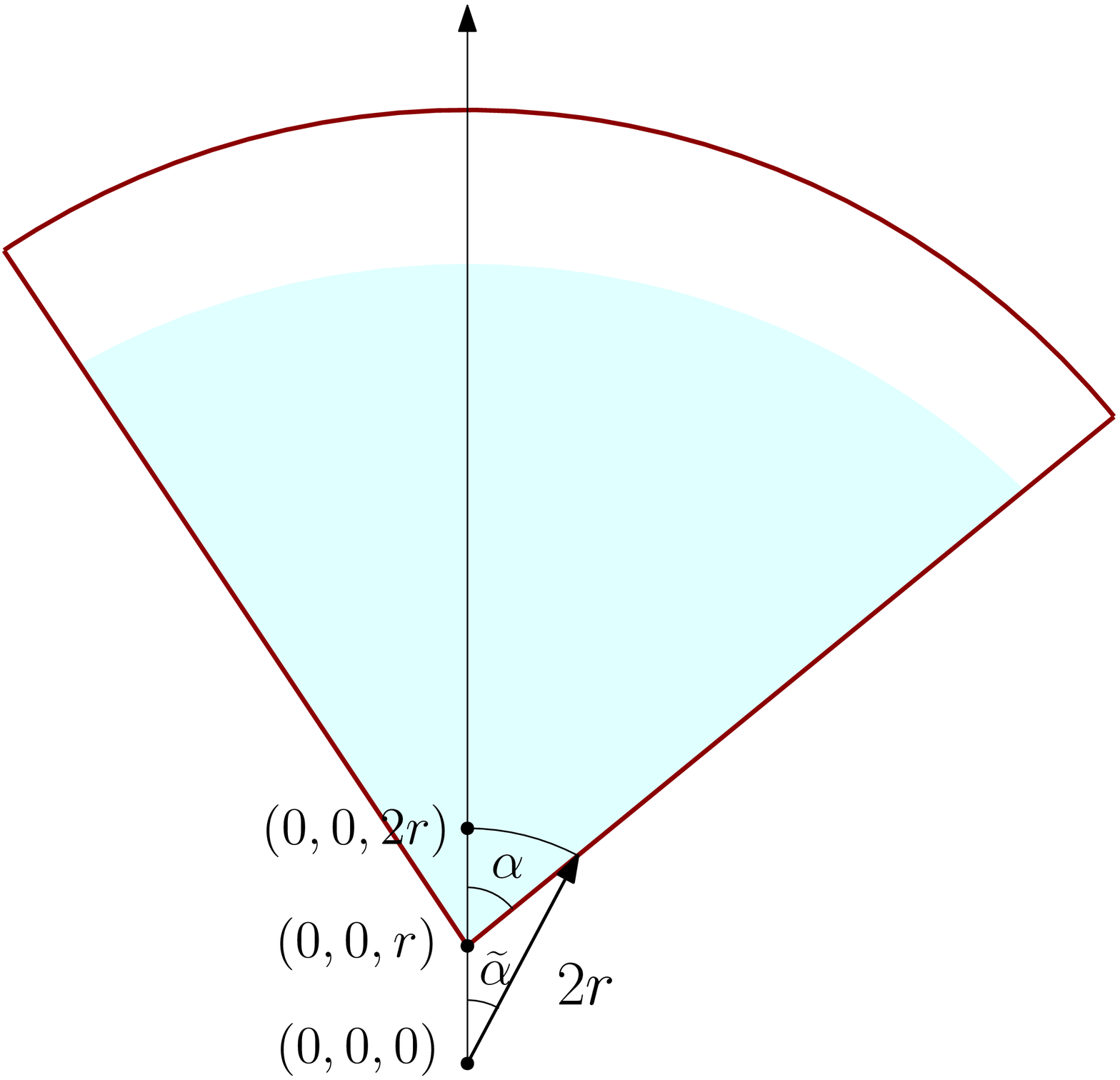}
      \caption{One can see clearly that $\frac{\alpha(\varphi)}{2}<\widetilde{\alpha}(\varphi,\widetilde{\rho})<\alpha(\varphi)$ for $\widetilde{\rho}>2$.}
    \end{subfigure}
    \caption{Illustrations of the domains defined.}\label{cone}
  \end{figure}

 By taking
the integral of $\frac{x_3(-9x_1^2-9x_2^2+6x_3^2)}{|x|^7}$ in
$\mathcal{C}'(r,r_0,\alpha)$ for small $r$,
 we get 
\begin{align}\label{est_C}
&\int_{\mathcal{C}'(r,r_0,\alpha)}\frac{x_3(-9x_1^2-9x_2^2+6x_3^2)}{|x|^7}
\mathrm{d}x\notag\\
&=r^{-1}\int_{1}^{r_0/r}{\rm d}\widetilde{\rho}\int_0^{2\pi}{\rm
d}\varphi\int_0^{\widetilde{\alpha}(\varphi,\widetilde{\rho})}{\rm
d}\widetilde{\theta}\left[\widetilde{\rho}^{-2}
\sin\widetilde{\theta}\cos\widetilde{\theta}(-9\sin^2\widetilde{\theta}+6\cos^2\widetilde{\theta})\right].
\end{align}
Next we will bound the above integral from below for $r> 0$ small.
For any $\widetilde{\rho}\in[1,\frac{r_0}{r}]$ and $\varphi\in[0,2\pi]$, since
$\widetilde{\alpha}(\varphi,\widetilde{\rho})\in [0,\frac{\pi}{2}]$, we have
\begin{align*}
&\int_0^{\widetilde{\alpha}(\varphi,\widetilde{\rho})}\left[
\sin\widetilde{\theta}\cos\widetilde{\theta}(-9\sin^2\widetilde{\theta}+6\cos^2\widetilde{\theta})\right]{\rm
d}\widetilde{\theta}\\
&=3(\cos^3\widetilde{\theta}-\cos^5\widetilde{\theta})\Big\vert_{0}^{\widetilde
{\alpha}(\varphi,\widetilde{\rho})}\\
&=3\cos^3\widetilde{\alpha}(\varphi,\widetilde{\rho})-3\cos^5\widetilde{\alpha}
(\varphi,\widetilde{\rho})\\
&\geq 0.
\end{align*}
 By elementary geometry, we have for any $\widetilde{\rho}>2$
that  
 \[
 0<\frac{\alpha_1}{2}<\frac{\alpha(\varphi)}{2}<\widetilde{\alpha}(\varphi,\widetilde{\rho})<\alpha(\varphi)<\alpha_2<\frac{\pi}{2},
 \]
 which is illustrated in Figure \ref{cone}(B), and then
\begin{align*}
&\int_0^{\widetilde{\alpha}(\varphi,\widetilde{\rho})}\left[
\sin\widetilde{\theta}\cos\widetilde{\theta}(-9\sin^2\widetilde{\theta}+6\cos^2\widetilde{\theta})\right]{\rm
d}\widetilde{\theta}\\
&=3\cos^3\widetilde{\alpha}(\varphi,\widetilde{\rho})-3\cos^5\widetilde{\alpha}
(\varphi,\widetilde{\rho})\\
&\geq 3\min\left\{\cos^3\frac{\alpha_1}{2}-\cos^5\frac{\alpha_1}{2},
\cos^3\alpha_2-\cos^5\alpha_2\right\}\\
&> 0,
\end{align*}
for $\widetilde{\rho}>2$.
Thus we obtain
\begin{align*}
&\int_{1}^{+\infty}{\rm d}\widetilde{\rho}\int_0^{2\pi}{\rm
d}\varphi\int_0^{\widetilde{\alpha}(\varphi,\widetilde{\rho})}{\rm
d}\widetilde{\theta}\left[\widetilde{\rho}^{-2}
\sin\widetilde{\theta}\cos\widetilde{\theta}(-9\sin^2\widetilde{\theta}+6\cos^2\widetilde{\theta})\right]\\
&\geq \int_{2}^{+\infty}{\rm d}\widetilde{\rho}\int_0^{2\pi}{\rm
d}\varphi\int_0^{\widetilde{\alpha}(\varphi,\widetilde{\rho})}{\rm
d}\widetilde{\theta}\left[\widetilde{\rho}^{-2}
\sin\widetilde{\theta}\cos\widetilde{\theta}(-9\sin^2\widetilde{\theta}+6\cos^2\widetilde{\theta})\right]\\
&\geq C_0,
\end{align*}
where the constant $C_0>0$ depends on $\alpha_1,\alpha_2$. We also have 
\[
\left\vert\int_{r_0/r}^{+\infty}{\rm d}\widetilde{\rho}\int_0^{2\pi}{\rm
d}\varphi\int_0^{\widetilde{\alpha}(\varphi,\widetilde{\rho})}{\rm
d}\widetilde{\theta}\left[\widetilde{\rho}^{-2}
\sin\widetilde{\theta}\cos\widetilde{\theta}(-9\sin^2\widetilde{\theta}+6\cos^2\widetilde{\theta})\right]\right\vert\leq C'\left\vert\int_{r_0/r}^{\infty}\widetilde{\rho}^{-2}\mathrm{d}\widetilde{\rho}\right\vert\leq C'r,
\]
where $C'$ is a positive constant. Therefore
\begin{align*}
&\int_{1}^{r_0/r}{\rm d}\widetilde{\rho}\int_0^{2\pi}{\rm
d}\varphi\int_0^{\widetilde{\alpha}(\varphi,\widetilde{\rho})}{\rm
d}\widetilde{\theta}\left[\widetilde{\rho}^{-2}
\sin\widetilde{\theta}\cos\widetilde{\theta}(-9\sin^2\widetilde{\theta}+6\cos^2\widetilde{\theta})\right]\\
& \geq \int_{1}^{+\infty}{\rm d}\widetilde{\rho}\int_0^{2\pi}{\rm
d}\varphi\int_0^{\widetilde{\alpha}(\varphi,\widetilde{\rho})}{\rm
d}\widetilde{\theta}\left[\widetilde{\rho}^{-2}
\sin\widetilde{\theta}\cos\widetilde{\theta}(-9\sin^2\widetilde{\theta}+6\cos^2\widetilde{\theta})\right]\\
&\quad -\left\vert\int_{r_0/r}^{+\infty}{\rm d}\widetilde{\rho}\int_0^{2\pi}{\rm
d}\varphi\int_0^{\widetilde{\alpha}(\varphi,\widetilde{\rho})}{\rm
d}\widetilde{\theta}\left[\widetilde{\rho}^{-2}
\sin\widetilde{\theta}\cos\widetilde{\theta}(-9\sin^2\widetilde{\theta}+6\cos^2\widetilde{\theta})\right]\right\vert\\
&\geq C_0-C'r.
\end{align*}
Using the above estimate and \eqref{est_C}, we have
\begin{equation}\label{bp}
\int_{\mathcal{C}'(r,r_0,\alpha)}\Phi(x)\mathrm{d}x\geq C_0r^{-1}-C_1|\log r|,
\end{equation}
where $C_0>0,\, C_1$ depend on $\alpha_1,\alpha_2,r_0,\kappa$, and we have
used the asymptotics of $\Phi$ given in \eqref{Phi_asymp} and the fact that
\[
\int_{\mathcal{C}'(r,r_0,\alpha)}|x|^{-3}\mathrm{d}x\leq C\int_{B_{r_0}\setminus B_r}|x|^{-3}\mathrm{d}x\leq C\int_{r}^{r_0}\rho^{-3}\rho^2\mathrm{d}\rho\leq C|\log r|.
\]

For $x\neq y$, we define
\[
G(x,y):=G(x-y)
\]
and
\begin{equation}\label{def_Phi}
\Phi(x, y) := \Phi(x-y)=-\mathrm{Im}(\partial^3_{x_3}G(x,y))=-\mathrm{Im}(\partial^3_{x_3}G(x-y)).
\end{equation}
It is easy to verify that
\[
\Phi(y, x)=\Phi(y-x)=-\mathrm{Im}(\partial^3_{y_3}G(x-y))=-\Phi(x,y)=-\Phi(x-y).
\]
For fixed $y$, it is clear to see that the function $\Phi(\cdot\,,\, y)$ is
singular at $x=y$ and satisfies the Helmholtz equation for $x\neq
y$.

\begin{remark}
The estimate \eqref{bp} with the constant $C_0>0$ is crucial for the proof of the main theorem. We can not have a positive $C_0$ near a facet point, for
which $\alpha\equiv\alpha_2 =\frac{\pi}{2}$.This is the fact that
corners always have strong scattering effects \textnormal{\cite{bps, blasten}}.
Therefore, we
will be essentially using ``corner scattering" to do the recovery. We refer to
\textnormal{\cite{BH-IP,XDL}} for similar approaches to recover piecewise constant
coefficients. We believe that one can also use ``edge scattering"
to serve our purposes.
\end{remark}

\section{Proof of the main result}\label{pf}

In this section, we show the proof of the main result which is stated in
Theorem \ref{main_theorem}. First we define a sequence of domains
which
will be used in the proof. 

Let
\[
U_0 = \Omega, \quad W_0=\emptyset, \quad U_k = \Omega\setminus\cup_{j=1}^kD_j,\quad W_k=\Omega\setminus U_k, \quad k = 1, ... ,
N.
\]
For each $k\in\{0,1, 2, ... , N-1\}$, consider the vertex $P^{(k+1)}$ of the
cell $D_{k+1}$. By choosing appropriate Cartesian coordinates
$(x_1^{(k+1)},x_3^{(k+1)},x_3^{(k+1)})$, we assume $D_{k+1}\cap
B_{r_0}(P^{(k+1}))=P^{(k+1)}+\mathcal{C}(r_0, \alpha^{(k+1)})$, with
$\alpha^{(k+1)}=\alpha^{(k+1)}(\varphi)$, $\varphi\in[0,2\pi]$, i.e., a cone
with vertex at $P^{(k+1)}$. By Assumption \ref{convexp}, we have
\[
\alpha_1<\alpha^{(k+1)}(\varphi)<\alpha_2
\]
for $\varphi\in [0,2\pi]$.

Denote $P^{(k+1)}=(p^{(k+1)}_1,p^{(k+1)}_2,p^{(k+1)}_3)$,
\[
\begin{split}
Q_{k+1}^-=\{x=(x_1^{(k+1)},x_2^{(k+1)},x_3^{(k+1)}):\,&|x_1^{(k+1)}-p^{(k+1)}_1|^2+|x_2^{(k+1)}-p^{(k+1)}_2|^2<
r_0^2,\,\\
&-2r_0<x_3^{(k+1)}-p^{(k+1)}_3<0\},
\end{split}
\]
and
\[
\mathcal{K}_k=\{x\in B_{R+r_0}:\mathrm{dist}(x,U_k)>r_0\}\cup Q_{k+1}^-.
\]
We note that $\mathcal{K}_k$ is connected under Assumption
\ref{gs}. Figure \ref{domain} shows an illustrative example of the domains
$U_k$ and
$\mathcal K_k$. 

\begin{figure}[htbp]
\centering
\includegraphics[width=0.4\textwidth]{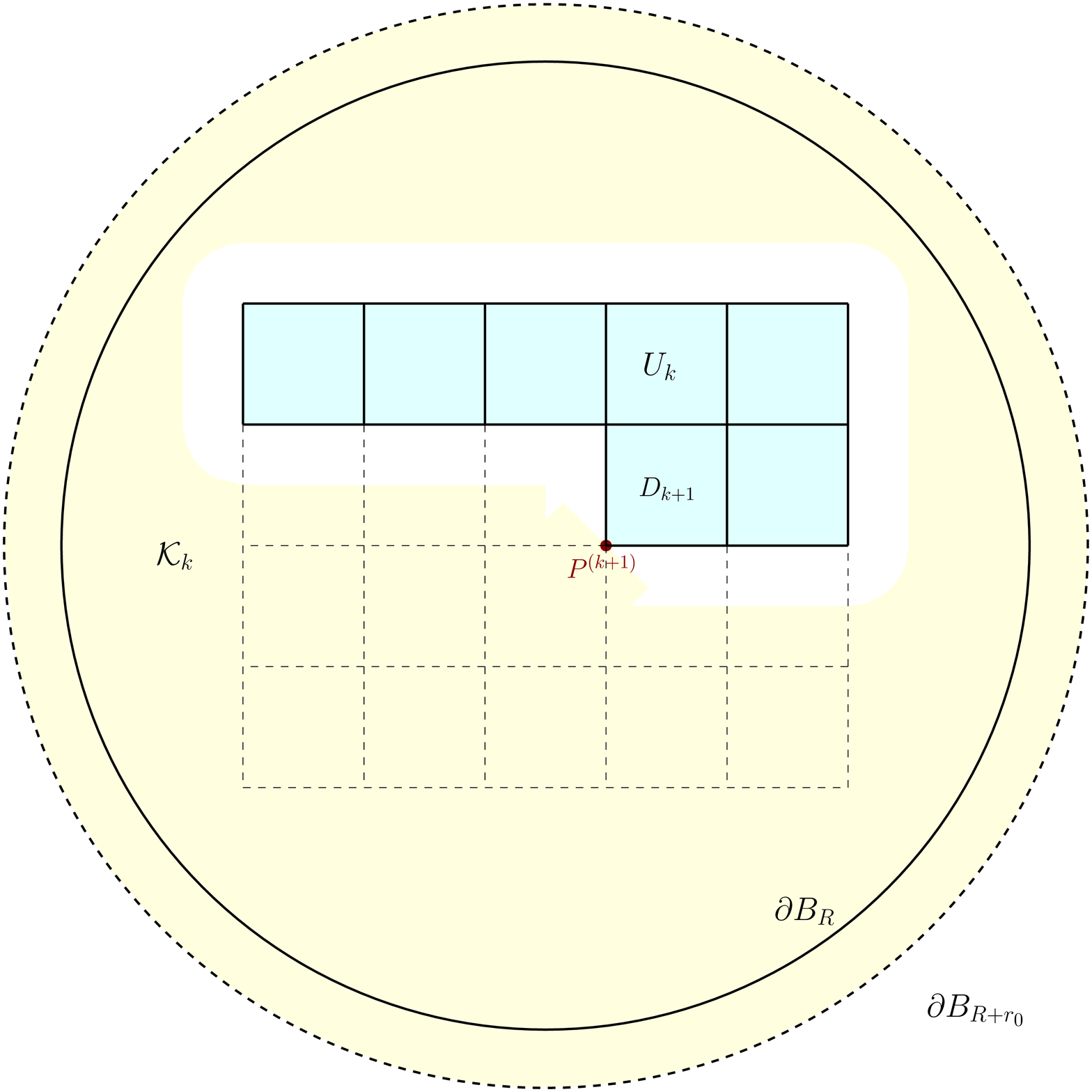}
\caption{The domains $U_k$ and $\mathcal{K}_k$ for $k=13$.}
\label{domain}
\end{figure}

\subsection{Unique continuation}

We state a quantitative estimate of unique continuation for the solution of the
Helmholtz equation. The proof is omitted since it is a minor modification of the
proof for a similar estimate in \cite[Proposition 3.9]{bhq} and
\cite[Proposition 7]{bfv}. We remark  that the proof is based on
the construction of a pathway and the repeated use of three spheres inequalities
under Assumption \ref{gs}.

\begin{proposition}\label{estimate}
Let $\mathcal{K}_k$ be defined as before and let $v\in H^1(\mathcal{K}_k)$ be a
weak solution to the Helmholtz equation
\[
\Delta v + \kappa^2 v = f \quad \text{in} \quad \mathcal{K}_k.
\]
Assume that, for given positive constants $\varepsilon_0$ and $E_1$, $v$ satisfies
\begin{align*}
\|v\|_{L^\infty(B_{R+r_0}\setminus B_{R+\frac{r_0}{2}}))} \leq \varepsilon_0
\end{align*}
and 
\begin{align*}
|v(x)|\leq E_1 |x - P^{(k+1)}|^{-1}, \quad x\in \mathcal{K}_k.
\end{align*}
Then the following inequality holds for small enough $r>0$:
\begin{align*}
|v(x_r)|\lesssim \varepsilon^{\tau_r}E_1^{1-\tau_r}r^{-(1-\tau_r)},
\end{align*}
where $x_r= P^{(k+1)} +(0,0,-r)$ and $\tau_r=\theta r^\delta$ with
$0<\theta<1$ and $\delta>0$ depending on $r_0,\kappa, N,A$.
\end{proposition}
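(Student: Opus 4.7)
The plan is to propagate the smallness of $v$ from the outer annular shell $B_{R+r_0}\setminus B_{R+\frac{r_0}{2}}$, where $v$ is controlled by $\varepsilon_0$, down to the point $x_r$ lying at distance $r$ from the (potentially singular) vertex $P^{(k+1)}$. The vehicle is the three spheres inequality for solutions of the Helmholtz equation, iterated along a chain of balls constructed inside $\mathcal{K}_k$. This is the Alessandrini--Vessella propagation scheme, as implemented for the Helmholtz operator in \cite[Proposition 3.9]{bhq} and \cite[Proposition 7]{bfv}; I only need to verify that the present geometry fits into that machinery.

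First I would fix a Lipschitz curve $\gamma\subset\mathcal{K}_k$ joining some point $x_0\in B_{R+r_0}\setminus B_{R+\frac{r_0}{2}}$ to $x_r$. The existence of $\gamma$ follows from the connectedness of $\mathcal{K}_k$ under Assumption \ref{gs}; away from $P^{(k+1)}$ the curve can be chosen so that its distance to $\partial \mathcal{K}_k$ is bounded below by a constant depending only on $r_0$, and only the terminal segment, which sits inside $Q_{k+1}^-$, is allowed to approach $P^{(k+1)}$. I would then cover $\gamma$ by a chain of balls $\{B_{\rho_i}(y_i)\}$ with $y_{i+1}\in B_{\rho_i/2}(y_i)$ and $\overline{B_{3\rho_i}(y_i)}\subset\mathcal{K}_k$, where $\rho_i$ stays bounded below by a constant $\rho_*\sim r_0$ along the portion of $\gamma$ at distance $\geq r_0$ from $P^{(k+1)}$.

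The three spheres inequality applied at each link of this macroscopic chain yields
\[
\|v\|_{L^\infty(B_{2\rho_i}(y_i))}\leq C\,\|v\|_{L^\infty(B_{\rho_i}(y_i))}^{\tau}\,\|v\|_{L^\infty(B_{3\rho_i}(y_i))}^{1-\tau}
\]
with $\tau\in(0,1)$ depending only on $\kappa r_0$; the second factor is controlled by the a priori bound $|v|\leq E_1|x-P^{(k+1)}|^{-1}\lesssim E_1 r_0^{-1}$ on the outer sphere of each triple. Since the number of macroscopic balls needed to go from $x_0$ to the anchor point $y_*:=P^{(k+1)}+(0,0,-r_0)$ is bounded in terms of $N,A,R,r_0$, a finite iteration produces $|v(y_*)|\lesssim \varepsilon_0^{\tau_0}E_1^{1-\tau_0}$ for some fixed $\tau_0\in(0,1)$.

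To reach $x_r$ I would then chain balls of geometrically decreasing radii $\rho_i\sim 2^{-i}r_0$ aligned along the segment from $y_*$ to $x_r$ inside $Q_{k+1}^-$, which is possible because this segment lies at distance of order $\rho_i$ from the cone $D_{k+1}\cap B_{r_0}(P^{(k+1)})$. After rescaling each triple to unit size the operator becomes $\Delta+\kappa^2\rho_i^2$, so the three spheres constant $\tau$ is scale-invariant. Approximately $|\log r|/\log 2$ steps are required to descend from radius $r_0$ to radius $r$, so the compounded exponent is
\[
\tau_r\sim\tau^{|\log r|}=r^{\log(1/\tau)},
\]
which is precisely of the form $\theta r^\delta$ with $\delta=\log(1/\tau)>0$. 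The singular majorant $E_1|x-P^{(k+1)}|^{-1}$ used on the outermost sphere of the last triple (where $|x-P^{(k+1)}|\sim r$) produces the factor $E_1^{1-\tau_r}r^{-(1-\tau_r)}$ in the conclusion.

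The main obstacle is the terminal geometric chain: one must verify that each $B_{3\rho_i}(y_i)$ stays inside $\mathcal{K}_k$ while $y_i\to x_r$, which requires the carved-in cylinder $Q_{k+1}^-$ of Assumption \ref{gs} and the convexity of the cone at $P^{(k+1)}$. Once this is arranged, the scale-invariance of the three spheres inequality ensures that the loss per step is uniform in $i$, so the exponent degenerates only polynomially, not exponentially, in $r$. This is the only point where the specific geometric setup intervenes; the remainder of the argument is identical to the construction in \cite{bhq, bfv}, which is why the authors omit the detailed proof.
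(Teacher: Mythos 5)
Your proposal follows exactly the route the paper intends: the paper omits the proof, citing \cite[Proposition 3.9]{bhq} and \cite[Proposition 7]{bfv} and describing it as ``the construction of a pathway and the repeated use of three spheres inequalities,'' which is precisely your scheme of a macroscopic chain through $\mathcal{K}_k$ followed by a geometrically shrinking chain down the cylinder $Q_{k+1}^-$, with the compounded exponent $\tau^{O(|\log r|)}\sim\theta r^{\delta}$ and the singular majorant supplying the factor $E_1^{1-\tau_r}r^{-(1-\tau_r)}$. This is correct and matches the stated conclusion.
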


\subsection{Proof of Theorem \ref{main_theorem}}

For some $k\in\{0,1,\cdots,N-1\}$, let
\[
\Phi_k(x,y):=-\mathrm{Im}(\partial^3_{x_3^{(k+1)}}G(x-y)).
\]
For a fixed $k$, we just denote the Cartesian coordinates
$(x_1,x_2,x_3)=(x_1^{(k+1)},x_3^{(k+1)},x_3^{(k+1)})$ for brevity. In the
following, we work exclusively under this coordinate system. Note that,
under these coordinates, formally we have
\[
\Phi_k(x,y)=\Phi(x-y).
\]
where $\Phi(\cdot,\cdot)$ is defined in \eqref{def_Phi}.

 Define
\[
S_k(y) = \int_{U_k}f(x)\Phi_k(x, y){\rm d}x.
\]

\begin{lemma}\label{S_sol}
For $y\in \mathcal{K}_k$, it holds that $(\Delta + \kappa^2)S_k(y) = 0.$
\end{lemma}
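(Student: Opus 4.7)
The plan is to differentiate under the integral sign and exploit the fact that $\Phi_k(x,\cdot)$ satisfies the Helmholtz equation in $y$ away from its singularity at $y=x$. The crucial geometric ingredient is that every $y \in \mathcal{K}_k$ has strictly positive distance to $U_k$, so that $\Phi_k(x,y)$ is smooth in $y$ for each $x \in U_k$ and is bounded together with its $y$-derivatives on small neighborhoods of any fixed $y_0 \in \mathcal{K}_k$.

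First, I would verify this separation, i.e.\ that $\mathcal{K}_k \cap U_k = \emptyset$. The piece $\{x \in B_{R+r_0} : \mathrm{dist}(x,U_k) > r_0\}$ is by definition at distance at least $r_0$ from $U_k$. For the piece $Q_{k+1}^-$, which lies in the open lower half-space $x_3^{(k+1)} - p_3^{(k+1)} < 0$: near $P^{(k+1)}$ the cell $D_{k+1}$ coincides with the cone $\mathcal{C}(r_0,\alpha^{(k+1)})$, which is contained in the closed upper half-space $x_3^{(k+1)} \geq p_3^{(k+1)}$ since $\alpha^{(k+1)}(\varphi) < \pi/2$; and by Assumption \ref{gs}(3), no $D_j$ with $j > k+1$ intersects $B_{3r_0}(P^{(k+1)})$, which contains $Q_{k+1}^-$. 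Hence $Q_{k+1}^- \cap U_k = \emptyset$, even though the two sets share the boundary point $P^{(k+1)}$.

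Next, I would use the standard identity $(\Delta_y + \kappa^2) G(x-y) = 0$ for $y \neq x$. Since $\partial_{x_j} G(x-y) = -\partial_{y_j} G(x-y)$, any $x$-derivative of $G$ is (up to a sign) a $y$-derivative, and constant-coefficient operators commute freely, so $(\Delta_y + \kappa^2)\,\partial_{x_3^{(k+1)}}^3 G(x-y) = 0$ for $y \neq x$. Taking imaginary parts yields the pointwise identity $(\Delta_y + \kappa^2) \Phi_k(x,y) = 0$ for every $x \in U_k$ and $y \in \mathcal{K}_k$.

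Finally, I would justify exchanging $\Delta_y + \kappa^2$ with the integral. Fix $y_0 \in \mathcal{K}_k$ and choose $\eta > 0$ small enough that the closed $2\eta$-ball around $y_0$ remains in $\mathcal{K}_k$ and has positive distance $\delta > 0$ to $U_k$; such $\eta$ exists by the separation above. On this ball, $\Phi_k(x,y)$ and its $y$-derivatives up to order two are bounded in absolute value by a constant depending only on $\delta$ and $\kappa$, uniformly in $x \in U_k$. Combined with $f \in L^\infty(\Omega)$ and $|U_k| \leq |\Omega| \leq A$, dominated convergence justifies differentiating under the integral sign, giving
\[
(\Delta + \kappa^2) S_k(y_0) = \int_{U_k} f(x)\,(\Delta_y + \kappa^2)\Phi_k(x, y_0)\,\mathrm{d}x = 0.
\]
The only delicate step is the geometric verification near the vertex $P^{(k+1)}$, where the distance from $y \in Q_{k+1}^-$ to $U_k$ can be arbitrarily small; but since this distance is always strictly positive for $y \in \mathcal{K}_k$, the argument goes through pointwise in $y_0$, and the lemma follows.
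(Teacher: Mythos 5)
Your proof is correct and follows essentially the same route as the paper's: the integrand is annihilated pointwise by $\Delta_y+\kappa^2$ because $x_3$-derivatives of $G(x-y)$ commute with the (real, constant-coefficient) operator $\Delta_y+\kappa^2$ and $G$ solves the Helmholtz equation away from its singularity, after which one differentiates under the integral sign. The paper merely asserts the disjointness of $U_k$ and $\mathcal{K}_k$ and the interchange of differentiation and integration, whereas you supply the geometric verification (convexity of $D_{k+1}$ placing it in the closed upper half-space $x_3^{(k+1)}\geq p_3^{(k+1)}$, and Assumption \ref{gs}(3) for the cells $D_j$ with $j>k+1$) together with the dominated-convergence justification --- both of which are correct and worthwhile additions.
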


\begin{proof}
Noting that for any $x\in U_k$, $y\in \mathcal{K}_k$, we have
\begin{align*}
f(x)(\Delta_y+\kappa^2)\Phi_k(x,
y)&=-f(x)(\Delta_y+\kappa^2)\mathrm{Im}(\partial^3_{x_3}G(x-y))\\
&=-f(x)\partial^3_{x_3}\mathrm{Im}((\Delta_y+\kappa^2)G(x-y))\\
&=0,
\end{align*}
since $U_k$ and $\mathcal{K}_k$ are disconnected.
The proof is completed if we change the order of integration and
differentiation.
\end{proof}

\begin{lemma}\label{SH}
If for some $\varepsilon_0>0$ and $k\in\{1, ... , N-1\}$, it holds
\begin{align*}
|S_k(y)|\leq \varepsilon_0, \quad\forall\, y\in B_{R+r_0}\setminus
B_{R+\frac{r_0}{2}},
\end{align*}
then
\begin{align*}
|S_k(y_r)|\lesssim E^{1-\tau_r}\varepsilon_0^{\tau_r}r^{-(1-\tau_r)},
\end{align*}
where $y_r = P^{(k+1)} +(0,0,-r)$ with $r$ being small enough and $\tau_r=\theta
r^\delta$ with the positive constants $\theta\in(0,1)$ and $\delta$ depending on
$r_0,\kappa, N,A$.
\end{lemma}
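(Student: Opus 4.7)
The plan is to apply Proposition \ref{estimate} directly with $v=S_k$. By Lemma \ref{S_sol}, $S_k$ weakly solves the Helmholtz equation on $\mathcal{K}_k$, and the outer-shell hypothesis $|S_k(y)|\leq\varepsilon_0$ on $B_{R+r_0}\setminus B_{R+r_0/2}$ is exactly the stated hypothesis of the lemma. The entire task therefore reduces to establishing the pointwise blow-up estimate
\[
|S_k(y)|\leq E_1\,|y-P^{(k+1)}|^{-1}\qquad\text{for all }y\in\mathcal{K}_k,
\]
with $E_1$ depending only on the admissible parameters; once this is in place, inserting $v=S_k$ into Proposition \ref{estimate} (with $E_1\lesssim E$ after absorbing constants) yields the stated interpolation bound at $y_r$.

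To derive the pointwise estimate I would start from $|S_k(y)|\leq E\int_{U_k}|\Phi_k(x,y)|\,\mathrm{d}x$ and split $\mathcal{K}_k$ into its two defining pieces. On the ``far'' piece $\{y\in B_{R+r_0}:\mathrm{dist}(y,U_k)>r_0\}$, the inequality $|x-y|\geq r_0$ throughout $U_k$ makes $|\Phi_k(x,y)|$ uniformly bounded, and since $|y-P^{(k+1)}|\leq 2R+r_0$, the estimate is immediate. On the ``near'' piece $y\in Q_{k+1}^-$, set $r=|y-P^{(k+1)}|$ (only small $r$ is nontrivial, since $r$ bounded below reduces to the far case). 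By Assumption \ref{gs}(3), no $D_\ell$ with $\ell>k+1$ meets $B_{3r_0}(P^{(k+1)})$, so $U_k\cap B_{r_0}(P^{(k+1)})=D_{k+1}\cap B_{r_0}(P^{(k+1)})$, which by Assumption \ref{convexp} coincides with the convex cone $P^{(k+1)}+\mathcal{C}(r_0,\alpha^{(k+1)})$; the contribution from $U_k\setminus B_{r_0}(P^{(k+1)})$ is again uniformly bounded. The crux is a geometric fact: $y$ lies in the open half-space $\{x_3^{(k+1)}<p_3^{(k+1)}\}$ whereas the cone lies in $\{x_3^{(k+1)}\geq p_3^{(k+1)}\}$ with opening angle bounded by $\alpha_2<\pi/2$, so elementary trigonometry gives
\[
\mathrm{dist}\bigl(y,\,P^{(k+1)}+\mathcal{C}(r_0,\alpha^{(k+1)})\bigr)\geq r\cos\alpha_2.
\]
Using the asymptotics $|\Phi_k(x,y)|\lesssim |x-y|^{-4}+|x-y|^{-3}$ from \eqref{Phi_asymp} and integrating over the shell $\{|w|\geq r\cos\alpha_2\}$ in coordinates centered at $y$ bounds the cone integral by $r^{-1}+|\log r|\lesssim r^{-1}$, whence $|S_k(y)|\lesssim E/r$ as needed.

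The main obstacle is exactly this geometric lower bound on $\mathrm{dist}(y,\mathrm{cone})$. It relies essentially on the strict convexity condition $\alpha^{(k+1)}<\pi/2$: if the cone were to degenerate into a half-space, a point $y\in Q_{k+1}^-$ with small $|y_3^{(k+1)}-p_3^{(k+1)}|$ could approach the cone surface tangentially, and the upper bound on $\int_{\mathrm{cone}}|\Phi_k|\,\mathrm{d}x$ would fail. This is the mirror image of the failure of the lower bound \eqref{bp} highlighted in the remark following Section \ref{ss}, and it is the same strict-convexity hypothesis at the corner that saves both estimates.
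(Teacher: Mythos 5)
Your proposal is correct and follows essentially the same route as the paper: invoke Lemma \ref{S_sol} to see that $S_k$ solves the Helmholtz equation in $\mathcal{K}_k$, establish the pointwise bound $|S_k(y)|\lesssim E\,|y-P^{(k+1)}|^{-1}$ by integrating $|\Phi_k(x,y)|\lesssim |x-y|^{-4}$ over $U_k$, and then apply Proposition \ref{estimate}. The only difference is one of detail: the paper compresses the middle step into a single chain of inequalities, implicitly using $\mathrm{dist}(y,U_k)\gtrsim |y-P^{(k+1)}|$ for $y\in\mathcal{K}_k$, whereas you spell out the geometric justification (splitting $\mathcal{K}_k$ into its far part and $Q_{k+1}^-$, and using convexity of $D_{k+1}$ with opening angle below $\alpha_2<\pi/2$ to get $\mathrm{dist}(y,D_{k+1})\geq |y-P^{(k+1)}|\cos\alpha_2$), which is a correct and worthwhile elaboration of what the paper leaves implicit.
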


\begin{proof}
It follows from Lemma \ref{S_sol} that $S_k$ satisfies $(\Delta +
\kappa^2)S_k(y) = 0$ in $\mathcal{K}_k$. Moreover, by the explicit forms of
$S_k(y)$ and $\Phi_k(x,y)$, we have
\[
|S_k(y)|\leq C E\int_{U_k}\frac{1}{|x-y|^4}\mathrm{d}x\leq
CE\int_{|y-P^{(k+1)}|}^\infty\rho^{-2}\mathrm{d}\rho\leq CE|y-P^{(k+1)}|^{-1},
\]
where $C>0$ is a constant depending on $\kappa,r_0$. By Proposition \ref{estimate}, we have for $r>0$
small enough that 
\begin{align*}
|S_k(y_r)|\lesssim E^{1-\tau_r}\varepsilon_0^{\tau_r}r^{-(1-\tau_r)},
\end{align*}
which completes the proof.
\end{proof}

Multiplying both sides of \eqref{eqn} by $\Phi_k(x, y)$ for $y\in
B_{R+r_0}\setminus B_{R+\frac{r_0}{2}}$ and using integration by parts, we
have
\begin{equation}\label{integral_whole}
\begin{split}
&\int_{\Omega} f(x)\Phi_k(x, y){\rm d}x\\
=&\int_{B_R} f(x)\Phi_k(x, y){\rm d}x\\
=& \int_{B_R}\left[(\Delta+\kappa^2)u(x)\right]\Phi_k(x,y)\mathrm{d}x\\
=&\int_{B_R}u(x)(\Delta_x+\kappa^2)\Phi_k(x,y)\mathrm{d}x+\int_{\partial B_R}
\left[\partial_{\nu(x)}u(x)\Phi_k(x, y) - \partial_{\nu(x)}\Phi_k(x,
y)u(x)\right]{\rm d}s\\
=& \int_{\partial B_R} \left[\partial_{\nu(x)}u(x)\Phi_k(x, y) -
\partial_{\nu(x)}\Phi_k(x, y)u(x)\right]{\rm d}s,
\end{split}
\end{equation}
where $\nu$ is the unit outer normal vector on $\partial B_R$.

First, note that for $k=0$,
\[
S_0(y)=\int_{\Omega} f(x)\Phi_0(x, y){\rm d}x.
\]
Also notice that
\[
\int_{\partial
B_R}|\Phi_0(\cdot,y)|^2+|\partial_\nu\Phi_0(\cdot,y)|^2\mathrm{d}s\leq C
\]
for $y\in B_{R+r_0}\setminus
B_{R+\frac{r_0}{2}}$, where $C$ depends on $R,\kappa,r_0$.
Notice that $u\vert_{\mathbb{R}^3\setminus B_R}$ is the solution to the exterior problem for the Helmholtz equation
\[
\Delta u+\kappa^2 u=0\quad\text{in }\mathbb{R}^3\setminus B_R
\]
along with the radiation condition \eqref{src}.
For the above exterior problem, it is shown in \cite[Theorem 2.6.4]{Nedelec}
that there exists a bounded operator $\mathcal{N}: H^1(\partial B_R)\rightarrow
L^2(\partial B_R)$, which is called exterior Dirichlet-to-Neumann map, such
that 
\[
\partial_\nu u = \mathcal{N} u \quad \text{on} ~\partial B_R.
\]
Hence, the Neumann data $\partial_\nu u$ on $\partial B_R$ can be obtained once the Dirichlet date $u$ is available on $\partial B_R$.
Therefore, we obtain the following estimate
\begin{equation*}
\int_{\partial B_R} (|\partial_\nu u|^2 + \kappa^2|u|^2){\rm d}s = \int_{\partial B_R} (|\mathcal{N} u|^2 + \kappa^2|u|^2){\rm d}s\leq C\|u\|^2_{H^1(\partial B_R)}\leq C \epsilon^2,
\end{equation*}
where $C$ depends on $\kappa$ and $R$.
Therefore by \eqref{integral_whole}, we obtain
\begin{equation}\label{est_S0}
\left\vert S_0(y)\right\vert\lesssim\epsilon, \quad y\in B_{R+r_0}\setminus
B_{R+\frac{r_0}{2}}.
\end{equation}

First we prove a logarithmic-type stability. Denote
$\delta_0=\epsilon$ and $\delta_j = \|f\|_{L^\infty(W_j)}, ~ j = 1,\cdots, N$.
We will inductively prove that the following estimates hold:  
\begin{equation}\label{deltak_est}
\delta_j\leq\omega_j(\epsilon),
\end{equation}
where $\omega_0(\epsilon)\leq \omega_1(\epsilon)\leq\cdots\leq \omega_N(\epsilon)$ for any small $\epsilon>0$ and
\begin{equation*}
\lim_{\epsilon\rightarrow 0}\omega_j(\epsilon)=0
\end{equation*}
 for each $j$. The estimate \eqref{deltak_est} is clearly true for $j=0$, for which
$\omega_0(\epsilon)=\epsilon$, by invoking \eqref{est_S0}. We now assume that the estimate \eqref{deltak_est} is true for $j= k$, and deduce the estimate for
$j=k+1$.
 
Recall that
\[
\begin{split}
S_k(y) = &\int_{U_k}f(x)\Phi_k(x, y){\rm d}x\\
=&\int_{\Omega}f(x)\Phi_k(x, y){\rm d}x-\int_{W_k}f(x)\Phi_k(x, y){\rm d}x.
\end{split}
\]
Thus we have the estimate
\begin{equation}\label{comb1}
|S_{k}(y)|\leq \left\vert\int_{\Omega}f(x)\Phi_k(x,y)\mathrm{d}x\right\vert+\left\vert\int_{W_k}f(x)\Phi_k(x,y)\mathrm{d}x\right\vert.
\end{equation}
Similar to \eqref{est_S0}, we have
\begin{equation}\label{comb2}
\left\vert\int_{\Omega}f(x)\Phi_k(x,y)\mathrm{d}x\right\vert\leq C\epsilon
\end{equation}
for  $y\in B_{R+r_0}\setminus B_{R+\frac{r_0}{2}}$. 
For the estimate of the second term in the right hand side of \eqref{comb1}, first notice that $|x-y|>Cr_0$ for $x\in W_k$ and $y\in B_{R+r_0}\setminus B_{R+\frac{r_0}{2}}$, and therefore
\[
|\Phi_k(x,y)|\leq\frac{C}{|x-y|^4}\leq \frac{C}{r_0^4}.
\]
Also we have $|f(x)|\leq C\omega_k(\epsilon)$ for $x\in W_k$  by the hypothesis for induction. Therefore
\begin{equation}\label{comb3}
\left\vert\int_{W_k}f(x)\Phi_k(x,y)\mathrm{d}x\right\vert\leq
C\omega_k(\epsilon)
\end{equation}
for $y\in B_{R+r_0}\setminus B_{R+\frac{r_0}{2}}$.
Combining the estimates \eqref{comb1}--\eqref{comb3}, we obtain 
\begin{align*}
|S_{k}(y)|\lesssim(\epsilon + \omega_k(\epsilon)),\quad y\in B_{R+r_0}\setminus
B_{R+\frac{r_0}{2}}.
\end{align*}
Note that the above estimate is also valid for $k=0$, for which $W_0=\emptyset$.
Now let $y_r = P^{(k+1)}+(0,0,-r)$, where $r$ is small enough. By Lemma \ref{SH}, we
have,
\begin{equation}\label{estimate_Sk}
|S_{k}(y_r)|\lesssim r^{-1}\omega_k(\epsilon)^{\tau_r}.
\end{equation}

Next, we write
\[
S_{k}(y_r) = I_1 + I_2,
\]
where
\begin{align*}
I_1 &=\int_{B_{r_0}(y_r)\cap D_{k+1}} f(x)\Phi_k(x, y_r){\rm d}x,\\
I_2 &= \int_{U_{k}\backslash (B_{r_0}(y_r)\cap D_{k+1})} f(x)\Phi_k(x, y_r){\rm
d}x.
\end{align*}
\begin{figure}[htbp]
\centering
\includegraphics[width=0.3\textwidth]{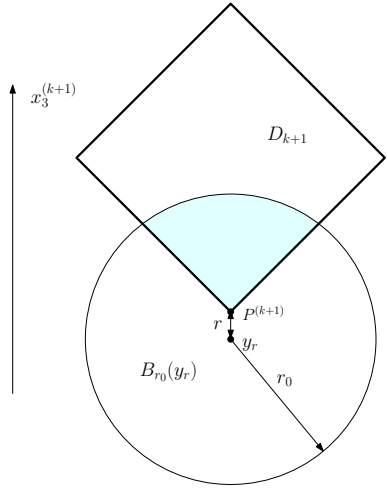}
\caption{The shaded region is $B_{r_0}(y_r)\cap D_{k+1}$.}
\label{nearcorner}
\end{figure}
The region $B_{r_0}(y_r)\cap D_{k+1}$ is depicted in Figure \ref{nearcorner}.
First it is easy to verify that
\begin{align}\label{I_2}
|I_2|\lesssim 1.
\end{align}
Combining \eqref{estimate_Sk} and \eqref{I_2} yields 
\begin{align}\label{estimate_2}
|I_1|\lesssim r^{-1}\omega_k(\epsilon)^{\tau_r}+1. 
\end{align}
Since $f(x)=c_{k+1}$ on $D_{k+1}$, we have
\[
|I_1|=|c_{k+1}|\left\vert\int_{B_{r_0}(y_r)\cap D_{k+1}} \Phi_k(x, y_r){\rm d}x\right\vert.
\]
By \eqref{bp}, we have
\[
\left\vert\int_{B_{r_0}(y_r)\cap D_{k+1}} \Phi_k(x, y_r){\rm d}x\right\vert=\left\vert\int_{\mathcal{C}'(r,r_0,\alpha^{(k+1)})} \Phi(x){\rm d}x\right\vert\geq C_0r^{-1}-C_1r^{-1/2},
\]
where $C_0,C_1$ two positive constants.
Together with \eqref{estimate_2}, we obtain
\[
C_0|c_{k+1}|r^{-1}\lesssim|I_1|+r^{-1/2}\leq r^{-1}\omega_k(\epsilon)^{\tau_r}+r^{-1/2}.
\]
Multiplying above inequality by $r$ gives 
\[
|c_{k+1}|\lesssim \omega_k(\epsilon)^{\tau_r}+r^{1/2},
\]
where $r>0$ is small enough. Taking 
\[
r=|\log\omega_k(\epsilon)|^{-\frac{1}{2\delta}},
\]
we obtain
\[
|c_{k+1}|\lesssim |\log\omega_k(\epsilon)|^{-\frac{1}{4\delta}}.
\]
Hence
\[
\delta_{k+1}\lesssim \omega_{k+1}(\epsilon):=|\log\omega_k(\epsilon)|^{-\frac{1}{4\delta}}.
\]
Remember that $\delta>0$ depends on $r_0,\kappa, N,A$. Then it is easy to verify that $\lim_{\epsilon\rightarrow 0}\omega_{k+1}(\epsilon)=0$. This completes the induction and we can now conclude that
\begin{equation}\label{log_stability}
\|f\|_{L^\infty(\Omega)}\lesssim \omega_N(\epsilon),
\end{equation}
where $\lim_{\epsilon\rightarrow 0}\omega_{N}(\epsilon)=0$.

The final Lipschitz-type stability is an almost immediate consequence of
\eqref{log_stability} since we are recovering a finite number of parameters. To
be rigorous, we use an abstract theorem in \cite[Theorem 2.1 and
Remark 2.2]{Bourgeois}. We also refer to
\cite[Proposition 5]{BV}.

\begin{lemma}
Let $\mathcal{X}$ and $\mathcal{Y}$ be two Banach spaces. Assume that $\mathcal{X}$ is of finite dimension $N$, and $T:\mathcal{X}\rightarrow\mathcal{Y}$ is a linear bounded operator. Let $\mathcal{K}$ be a compact and convex subset of $\mathcal{X}$. If $T$ is injective, then there exists a constant $C_N>0$ such that
\[
\|x\|_{\mathcal{X}}\leq C_N\|Tx\|_{\mathcal{Y}},
\]
for any $x\in\mathcal{K}$.\label{abs_lemma}
\end{lemma}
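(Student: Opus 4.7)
The plan is to exploit the finite dimensionality of $\mathcal{X}$ together with the injectivity of $T$ via a compactness argument on the unit sphere. Although the statement is phrased in terms of the compact convex set $\mathcal{K}$, the bound actually holds globally on $\mathcal{X}$ by linearity, so the restriction to $\mathcal{K}$ will be automatic.

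First, I would introduce the unit sphere $S := \{x \in \mathcal{X} : \|x\|_{\mathcal{X}} = 1\}$. Since $\dim \mathcal{X} = N < \infty$, all norms on $\mathcal{X}$ are equivalent and Heine--Borel applies, so $S$ is compact. Next, the map $x \mapsto \|Tx\|_{\mathcal{Y}}$ is continuous on $\mathcal{X}$ because $T$ is bounded, and strictly positive on $S$ because $T$ is injective: every $x \in S$ satisfies $x \neq 0$, hence $Tx \neq 0$. By compactness this continuous positive function attains its infimum on $S$, and $m := \min_{x \in S}\|Tx\|_{\mathcal{Y}}$ is therefore a strictly positive real number. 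Finally, for any nonzero $x \in \mathcal{X}$, applying the bound at $x/\|x\|_{\mathcal{X}} \in S$ and invoking the homogeneity of $T$ yields
\[
\|Tx\|_{\mathcal{Y}} = \|x\|_{\mathcal{X}}\,\bigl\| T(x/\|x\|_{\mathcal{X}}) \bigr\|_{\mathcal{Y}} \geq m\,\|x\|_{\mathcal{X}},
\]
so setting $C_N := 1/m$ gives $\|x\|_{\mathcal{X}} \leq C_N\|Tx\|_{\mathcal{Y}}$ for every $x \in \mathcal{X}$, and in particular for every $x \in \mathcal{K}$.

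The main ``obstacle'' is essentially nonexistent: this is the classical fact that a bounded linear injection out of a finite-dimensional Banach space is bounded below, i.e.\ is a linear homeomorphism onto its image. The compact convex subset $\mathcal{K}$ plays no substantive role under the stated linearity of $T$; it appears here only because the abstract framework of Bourgeois (and of [BV]) is phrased for possibly nonlinear Fr\'echet-differentiable maps, where compactness and convexity of $\mathcal{K}$ would be needed to control a linearization. The only step requiring care in writing the argument is invoking the equivalence of norms on $\mathcal{X}$ to justify compactness of $S$, which is standard.
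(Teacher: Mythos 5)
Your proof is correct. The paper does not actually prove this lemma; it simply cites Bourgeois (Theorem 2.1 and Remark 2.2) and Bacchelli--Vessella (Proposition 5), whose abstract results are designed for nonlinear Fr\'echet-differentiable maps and genuinely need the compactness and convexity of $\mathcal{K}$ to control the linearization. Your self-contained argument --- compactness of the unit sphere in the finite-dimensional space $\mathcal{X}$, strict positivity of the continuous function $x\mapsto\|Tx\|_{\mathcal{Y}}$ on that sphere by injectivity, and homogeneity to pass from the sphere to all of $\mathcal{X}$ --- is the standard fact that an injective bounded linear operator on a finite-dimensional space is bounded below, and it proves the stated inequality globally, so a fortiori on $\mathcal{K}$. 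Your observation that $\mathcal{K}$ plays no substantive role once $T$ is assumed linear is accurate; the hypothesis is inherited from the more general nonlinear framework of the cited references. What your elementary route buys is transparency and independence from the literature; what the citation buys the authors is a statement that also covers the nonlinear settings treated elsewhere. One cosmetic remark: since $\mathcal{X}$ may be a complex vector space here ($\mathcal{X}=\mathbb{C}^N$ in the application), ``finite dimension $N$'' should be read accordingly, but this does not affect the compactness argument.
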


For our problem, set
\[
\mathcal{X}=\mathbb{C}^N,\quad \mathcal{Y}=H^1(\partial B_R),\quad
\mathcal{K}=\{(c_1,\cdots,c_N)\in\mathbb{C}^N: |c_j|\leq E ~ \forall
j=1,2,\cdots, N\}.
\]
We consider the linear operator $T:\mathbb{C}^N\rightarrow H^1(\partial B_R)$
such that
\[
T(c_1,\cdots,c_N)\mapsto u\vert_{\partial B_R},
\]
where $u$ solves \eqref{eqn} with $f$ being given by the form \eqref{formf}. The
boundedness of $T$ follows directly from \eqref{est_u}, and the injectivity
results from \eqref{log_stability}. Therefore a Lipschitz stability follows
immediately by Lemma \ref{abs_lemma}, which finishes the proof of Theorem
\ref{main_theorem}.


\section{Conclusion}\label{conclusion}

We have presented the Lipschitz stability for the inverse source
scattering problem of the three-dimensional Helmholtz equation in a
homogeneous background medium, where the source is assumed be a piecewise
constant function. The analysis requires the Dirichlet data only. The proof relies
on the construction of singular solutions and the quantitative estimate of
unique continuation of the solutions for elliptic-type equations. A possible
continuation of this work is to study the corresponding stability estimates of
the inverse source problems for elastic and electromagnetic waves, where the
fundamental solutions are tensors and therefore more sophisticated analysis is needed. We
will report the progress elsewhere in the future.

\subsection*{Acknowledgements}

The authors want to express their sincere gratitude to the referees
whose invaluable comments have helped to improve this paper.

\end{document}